\newtheorem{theorem}{Theorem}[section]
\newtheorem{lemma}[theorem]{Lemma}
\newtheorem{corollary}[theorem]{Corollary}
\newtheorem{proposition}[theorem]{Proposition}
\theoremstyle{definition}
\theoremstyle{remark}
\newtheorem{remark}[theorem]{Remark}
\numberwithin{equation}{section}
\begin{document}

\title{Non-rigidity of spherical inversive distance circle packings}
\author{Jiming Ma}

\address{School of Mathematical Sciences, Fudan University, Shanghai,
China,
  200433}

\email{majiming@fudan.edu.cn}

\author{Jean-Marc Schlenker}
\address{
Institut de Math\'ematiques de Toulouse, UMR CNRS 5219, Universit\'e Paul Sabatier, 118 route de Narbonne, 31062 Toulouse Cedex 9, France
} \email{schlenker@math.univ-toulouse.fr}

\subjclass[2000]{52C25, 53B30}
\date{May 3, 2011}

\thanks{The first author was supported in part by NSFC 10901038 and Shanghai NSF
10ZR1403600. The second author was supported in part by ANR programs ETTT, ANR-09-BLAN-0116-01, 2009-13,
and ACG, ANR-10-BLAN-0105, 2010-14.}

\keywords{inversive distance circle packing, rigidity}

\begin{abstract}
We give a counterexample  of Bowers-Stephenson's conjecture in the spherical case:
spherical  inversive distance circle packings are not determined  by their inversive distances.
\end{abstract}

\maketitle

\section{Introduction}

In this note we study inversive distance circle packing metrics on a surface $F$.

\subsection{Polyhedral surface}

Given  a  triangulated closed orientable surface $F$, a Euclidean (resp. spherical or hyperbolic) \emph{polyhedral surface}
is a map $l: E \rightarrow \mathbb{R}^{+}$, where $E$ is the set of all edges of the triangulation,
such that when  $e_{1}$, $e_{2}$ and  $e_{3}$ are  the three edges of a triangle, then $l(e_{1})+l(e_{2})>l(e_{3})$
(it is also  required  that $l(e_{1})+l(e_{2})+l(e_{3})< 2\pi$ in the spherical case).
From this $l$, there is a \emph{polyhedral metric} in $F$ such that the restriction of the metric
to each triangle is isometric to a triangle in $\mathbb{E}^{2}$ (resp. $\mathbb{S}^{2}$ or $\mathbb{H}^{2}$)
and  the length of an edge $e$ is given by $l(e)$.
For instance,  the boundary of a generic convex polyhedron in $\mathbb{E}^{3}$ (resp. $\mathbb{S}^{3}$
or $\mathbb{H}^{3}$) admits a natural polyhedral metric.

The \emph{discrete curvature} $k$ of a polyhedral surface is the map $k:V \rightarrow \mathbb{R}$,
where  $V$ is the set of all vertices of the triangulation, and for a vertex $v \in V$,
$k(v)=2\pi-\sum^{m}_{i=1}\theta_{i}$, where $\theta_{i}$ are the angles at the vertex $v$.

\subsection{Inversive distance circle packings}

The notion of inversive distance circle packing was introduced by Bowers-Stephenson in \cite{bw04},
it is a generalization  of Andreev and Thurston's circle packings on a surface, where two circles may intersect or not.
We just give the definition of the spherical inversive distance circle packing,
for Euclidean and hyperbolic cases, see \cite{bw04} and \cite{l10} for more detailed discussions.

For two circles $\mathcal{C}_{1}$  and  $\mathcal{C}_{2}$  centered at $v_{1}$, $v_{2}$ of radii $r_{1}$ and $r_{2}$ in $\mathbb{S}^{2}$,  so that $v_{1}$ and $v_{2}$ are of
distance $l$ apart, the \emph{inversive distance} $I=I(\mathcal{C}_{1}, \mathcal{C}_{2})$ between them is
\begin{equation}
I =\frac{\cos(l)- \cos(r_{1}) \cos(r_{2})}{\sin(r_{1}) \sin(r_{2})}.
\end{equation}

When viewed in $B^{3}$ considered as the Klein model of $\mathbb{H}^{3}$, the inversive distance is essentially the hyperbolic
distance (or the intersection angle) between the two totally geodesic planes in $\mathbb{H}^{3}$ with $\mathcal{C}_{i}$ as their ideal boundaries.
When those planes intersect, the inversive distance is the $\cos$ of their angle, and if they're disjoint,
it is the $\cosh$ of their distance.

In a triangulated surface $F$, a \emph{spherical inversive distance circle packing} is given as follows: fix a vector $I \in [-1, \infty)^{E}$, called the \emph{inversive distance vector}. For any $r \in (0, \infty)^{V}$, called the \emph{radius vector}, define the edge length by $l(e)=\sqrt{r(u)^{2}+r(v)^{2}+2r(u)r(v)I(e)}$ for an edge $e$ with $u$ and $v$ as its end points.
If for any triangle with $e_{1}$,  $e_{2}$ and  $e_{3}$ as its three edges, we have $l(e_{1})+l(e_{2})>l(e_{3})$ and $l(e_{1})+l(e_{2})+l(e_{3})< 2\pi$, then the edge length function $l: E \rightarrow \mathbb{R}$ defines a spherical polyhedral metric on $F$, which is called the \emph{spherical inversive distance circle packing  metric} with inversive distance $I$.

The geometric meaning is that in  $F$ with this polyhedral metric, if we draw circles with radii $r$ at the vertices $V$, then the inversive distance of two circles at the end points  of an edge $e$ is the given number $I(e)$.

It was conjectured by Bowers and Stephenson \cite{bw04} that inversive distance circle packings have a global rigidity property:
an inversive distance circle packing is determined by its combinatoric, inversive distance vector and discrete curvature at
the vertices.
Luo \cite{l10} proved Bowers-Stephenson's conjecture in the hyperbolic and Euclidean cases.
In this note, we give a counterexample in the spherical case:

\vskip 3mm
{\bf  Theorem 2.4.}
 \emph{There is a triangulation of $S^{2}$ and two spherical inversive distance circle packings with the same
inversive distance and discrete curvature, but they are not M\"{o}bius  equivalent.}
\vskip 3mm

The example we construct actually have zero discrete curvature at all vertices, so they are inversive distance
circle patterns on the (non-singular) sphere.

{\bf Acknowledgements}: This work was done when the first author was visiting Institut de  Math\'{e}matiques de Toulouse,
Universit\'{e} Paul Sabatier (Toulouse III), he would like to thank it for its hospitality.
He also would like to thank the China Scholarship Council for financial support.

\section{Proof of the theorem}

The proof of our theorem uses a well-known infinitesimal flexible Euclidean polyhedron
and the Pogorelov map which  preserves the relative distances between two points in the configurations in different geometries.
We first give a rapid preliminary.

\subsection{The hyperbolic and the de Sitter space}

Let $\langle x,y \rangle= -x_{0}y_{0}+x_{1}y_{1}+x_{2}y_{2}+x_{3}y_{3}$ be the symmetric 2-form  in the Minskowski space $\mathbb{R}^{4}_{1}$, recall that
the hyperbolic space is
\begin{equation}
\mathbb{H}^{3}=\{ x \in \mathbb{R}^{4}_{1} | ~\|x\|^{2}=-1, x_{0}> 0\}
\end{equation}
with the induced Riemannian metric on it,
which is a hyperboloid in $\mathbb{R}^{4}_{1}$.
The totally geodesic planes in $\mathbb{H}^{3}$ are the intersections between $\mathbb{H}^{3}$ and hyperplanes in $\mathbb{R}^{4}$ which pass through the origin.

Let $B^{3}$ be the unit ball in $\mathbb{R}^{3}$, then, there is a projective map $p_{\mathbb{H}}: \mathbb{H}^{3}\rightarrow B^{3}$ given by $\rho((x_{0}, x_{1}, x_{2}, x_{3}))=(x_{1}, x_{2}, x_{3})/x_{0}$, which is a homeomorphism and which maps geodesics in $\mathbb{H}^{3}$ into geodesics in $\mathbb{R}^{3}$.
This map is the projective model (Klein model) of the hyperbolic space.

A hyperideal hyperbolic polyhedron is the image of $p_{\mathbb{H}}^{-1}:  Q\cap B^{3}\rightarrow\mathbb{H}^{3}$,
where $Q$ is a Euclidean polyhedron in $\mathbb{R}^{3}$ such that all vertices of $Q$ lie out of $B^{3}$ and all edges of $Q$ intersect with $B^{3}$.
For a point $A$ in $\mathbb{R}^{3}- \overline{B^{3}}$, consider the space $A^{\bot}$ of
the points in $\mathbb{R}^{4}_{1}$ which are orthogonal to $p_{\mathbb{H}}^{-1}(A)$ in the symmetric 2-form.
$p_{\mathbb{H}}^{-1}(A)$ is a hyperbolic plane in $\mathbb{H}^{3}$. Then take  $A^{*}=p_{\mathbb{H}}(A^{\bot}) \cap B^{3}$,
so it is a hyperbolic plane in the Klein model of the 3-dimensional hyperbolic space $B^{3}$.
Thus its boundary is a round circle $\mathcal{C}_{A}$ in $\partial B^{3}=S^{2}$. $A^{*}$ is called the hyperbolic plane dual to $A$.
By a simple argument, the planes dual to two vertices  $A$ and $B$ of $Q$ don't intersect (this
is because the segment with endpoints $A$ and $B$ intersects the ball).
The length of an edge  of a hyperideal hyperbolic polyhedron is defined as the distance between the dual planes.

The de Sitter space can be defined as
\begin{equation}
\mathbb{S}^{3}_{1}=\{ x \in \mathbb{R}^{4}_{1} | ~\|x\|^{2}=1\}
\end{equation}
with the induced Lorentzian metric on it,
which is a one-sheeted hyperboloid in $\mathbb{R}^{4}_{1}$.
The totally geodesic planes in $\mathbb{S}^{3}_{1}$ are the intersections of $\mathbb{S}^{3}_{1}$
with the hyperplanes in $\mathbb{R}^{4}$ which pass through the origin.
Let
\begin{equation}
\mathbb{S}^{3}_{1, +}=\{ x \in \mathbb{R}^{4}_{1} ~ |  ~  \|x\|^{2}=1, x_{0}> 0\}
\end{equation}
be the upper de Sitter space.

As for the hyperbolic space, there is a projective map
$p: \mathbb{S}^{3}_{1, +}\rightarrow \mathbb{R}^{3}-\overline{B^{3}}$ given by
$\rho((x_{0}, x_{1}, x_{2}, x_{3}))=(x_{1}, x_{2}, x_{3})/x_{0}$, which is a homeomorphism and which
maps geodesic in $\mathbb{S}^{3}_{1, +}$ into geodesic in $\mathbb{R}^{3}$.

In the projective model of $\mathbb{S}^{3}_{1, +}$,
a geodesic maybe pass through $B^{3}$, and if it is the case, then the geodesic is \emph{time-like}. If a geodesic does not pass through the closure of $B^{3}$, then this geodesic is \emph{space-like}.

For more details on distances in the  de Sitter  space,  see \cite{s98}: for two points $x$ and $y$ in $\mathbb{S}^{3}_{1, +}$, if the geodesic  $[x,y]$ is a time-like geodesic, then the \emph{distance} $d$ between them is the negative number $d$ such that $\cosh(d)=\langle x, y\rangle$; if the geodesic  $[x,y]$ is a space-like geodesic, the \emph{distance} $d$ between them is  the unique number in $i[0,\pi]$ such that $\cosh(d)=\langle x, y\rangle$.

There is a duality between points in the de Sitter  space and oriented hyperplanes in the hyperbolic 3-space: consider the projective model of the upper de Sitter  space $\mathbb{R}^{3}- \overline{B^{3}}$, when $A$ lies in $\mathbb{R}^{3}- \overline{B^{3}}$, then the hyperplane  $A^{*}$ constructed above viewed as in hyperbolic 3-space is the dual of $A$.

When $A$ and $B$ are  two points
in $\mathbb{R}^{3}- \overline{B^{3}}$, such that the Euclidean line $L$ connecting $A$ to $B$ passes
through  $B^{3}$, then the de Sitter distance between $A$ and $B$  is essentially  the hyperbolic distance
between the two planes $A^{*}$ and $B^{*}$: let $l$ be the distance between $A^{*}$ and $B^{*}$ in the hyperbolic space, then  $l=-d$. It is also essentially the inversive distance between the two circles
$\mathcal{C}_{A}$ and $\mathcal{C}_{B}$, where $\mathcal{C}_{A}$ and $\mathcal{C}_{B}$ are the ideal boundaries of the planes $A^{*}$¡¡and $B^{*}$ in $S^{2}= \partial B_{3}$.


\subsection{A flexible polyhedron}

We now describe a well-known example of an infinitesimally flexible polyhedron, which will be
the keystone of the construction of the counter-example presented here.

\begin{lemma}  There is a  Euclidean polyhedron $Q$ such that
\begin{enumerate}
\item all vertices of $Q$ lie out of $B^{3}$,
\item all edges of $Q$ intersect with $B^{3}$,
\item in any neighborhood of $Q$, there are two Euclidean  polyhedra $Q_{t}$ and $Q_{-t}$ which have the same combinatorics as $Q$ and the same corresponding edge lengths.
\end{enumerate}
\end{lemma}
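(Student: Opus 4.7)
The natural candidate for $Q$ is a Bricard octahedron: a (generally self-intersecting) octahedron in $\mathbb{R}^{3}$ which admits a non-trivial continuous one-parameter family of deformations $\{Q_{t}\}$ preserving the lengths of all twelve edges. If such a polyhedron can be positioned so that (1) and (2) hold, then (3) follows for free from the Bricard flex: every sufficiently small neighborhood of $Q = Q_{0}$ contains $Q_{t}$ and $Q_{-t}$ for small $t>0$, and these share the combinatorics and all edge lengths with $Q$.

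The only real issue is the positioning. I would work with Bricard octahedra of type~I, which carry a $180^{\circ}$ rotational symmetry about an axis that I place along the $z$-axis. Condition (1) is trivial up to homothety: the six vertices can be scaled simultaneously to have Euclidean norm greater than $1$. The more delicate condition (2)---that every edge of $Q$ meets the unit ball $B^{3}$---is equivalent to requiring that every one of the twelve edge-lines passes within Euclidean distance $1$ of the origin. Since both conditions are open in the positions of the vertices, it suffices to exhibit a single Bricard octahedron strictly satisfying them; (1) and (2) will then persist for $Q_{t}$ and $Q_{-t}$ for $|t|$ sufficiently small by continuity of the flex.

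Concretely, I would parameterize the family of type~I Bricard octahedra by the radial distances and heights of the three vertex orbits under the symmetry, writing out the Bricard length relations in coordinates, and verify by a direct calculation that one can simultaneously push the six vertices to Euclidean norm $>1$ while keeping all twelve edge-lines within Euclidean distance $<1$ of the $z$-axis. Because edges of type $AB'$ (joining a vertex to the rotational image of another) naturally pass near the symmetry axis, while edges of type $AB$ do not, the nontrivial point is to control the second kind simultaneously with the Bricard equations.

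The main (and really only) obstacle is this explicit geometric verification: the simultaneous requirements that the vertices be well outside $\overline{B^{3}}$ and that all edges enter $B^{3}$ for the \emph{same} Bricard octahedron are a priori in tension, and constitute the technical heart of the lemma. Once such a $Q$ is exhibited, the Bricard deformation supplies $Q_{t}$ and $Q_{-t}$ and the lemma is immediate; the subsequent application of the Pogorelov map to transport the Euclidean flex into the de Sitter setting, and thence to inversive distance circle packings, is a separate and conceptually independent step.
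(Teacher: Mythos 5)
Your strategy differs from the paper's in a significant way, and the difference is exactly where your argument has a gap. The paper does not use a Bricard octahedron or any continuously flexible polyhedron: it uses Sch\"onhardt's twisted octahedron (an equilateral triangle $ABC$ together with its image $A^{0}B^{0}C^{0}$ under a screw motion of angle $\pi/2$ about the axis through the centroid), which is only \emph{infinitesimally} flexible. That suffices because condition (3) only asks that $Q_{t}$ and $Q_{-t}$ have the same edge lengths \emph{as each other}, not as $Q$: if each mobile vertex is displaced along a straight line $v\mapsto v+t\eta_{v}$ realizing an infinitesimal flex, then every squared edge length is a quadratic polynomial in $t$ whose linear coefficient vanishes (that vanishing \emph{is} the infinitesimal isometry condition), hence is even in $t$, so $Q_{t}$ and $Q_{-t}$ are automatically isometric edge by edge. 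Your Bricard route would prove something stronger (all $Q_{t}$ isometric to $Q$ itself), which the lemma does not require, and which forces you into a much more constrained family of shapes.

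The gap is that you never exhibit a Bricard octahedron satisfying (1) and (2). You correctly identify the simultaneous requirements --- all six vertices outside $\overline{B^{3}}$, all twelve edges meeting $B^{3}$ --- as the technical heart of the lemma, and then leave them as an unperformed ``direct calculation.'' This is not a routine openness-plus-scaling argument: scaling up pushes the vertices out of the ball but also pushes the edge-lines away from the origin, and the Bricard length relations restrict which shapes are available, so an actual existence proof is needed. The paper resolves the analogous tension trivially because the Sch\"onhardt octahedron is so symmetric: conditions (1) and (2) reduce to the three explicit inequalities $h^{2}+a^{2}/3>1$, $h^{2}+a^{2}/12<1$, $a^{2}/3<1$ in the two parameters $h$ (half the height) and $a$ (the side of the equilateral triangles), satisfied for instance by $h=1/2$ and $a$ slightly larger than $3/2$. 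To complete your proof you must actually produce a suitably positioned Bricard octahedron; alternatively, switching to the Sch\"onhardt octahedron together with the even-in-$t$ observation closes the gap immediately. (A further minor caution: flexible octahedra are necessarily self-intersecting, whereas the Sch\"onhardt polyhedron is embedded; for the lemma as stated this may be harmless, but it is one more reason the paper's choice is cleaner for the downstream construction.)
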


\begin{proof}
We first recall Sch\"{o}nhardt's twisted octahedron (see \cite{s28} and \cite{i10}): let $ABC$ be an
equilateral triangle in $\mathbb{R}^{3}$, and let $L$ be a line that passes through the center
of $ABC$ and it is orthogonal to the plane of the triangle. Let $A^{0}B^{0}C^{0}$ be the image
of $ABC$ under a screw motion with axis $L$ and rotation angle $\pi/2$. Consider
a polyhedron $Q$ bounded by triangles $ABC$, $A^{0}B^{0}C^{0}$, $ABC^{0}$, $ A^{0}BC$, $ AB^{0}C$,
$A^{0}B^{0}C$, $AB^{0}C^{0}$, and $ A^{0}BC^{0}$. The polyhedron $Q$ is combinatorially isomorphic
to an octahedron, and has three edges with dihedral angles bigger than $\pi$:
the edges $AB^{0}$, $BC^{0}$, and $CA^{0}$, see Figure 1.

\begin{center}
\scalebox{0.50}[0.50]{\includegraphics {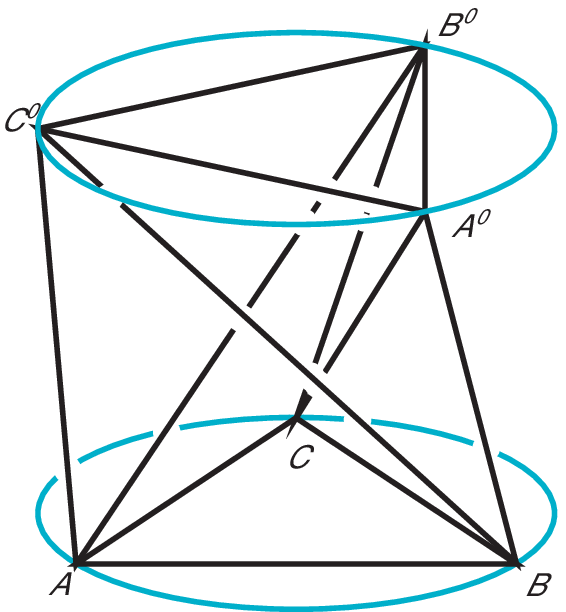}}
\end{center}

\begin{center}
Figure 1. Sch\"{o}nhardt's twisted octahedron
\end{center}

This polyhedron $Q$ is infinitesimal flexible (see \cite{i10}): there are vectors
$\eta_{A^{0}}$, $\eta_{B^{0}}$, $\eta_{C^{0}}$ in $\mathbb{R}^{3}$ such that the polyhedron $Q_{t}$
with the vertices $A$, $B$, $C$, $A^{0} + t \eta_{A^{0}}$, $B^{0} + t \eta_{B^{0}}$, $C^{0} + t \eta_{C^{0}}$,
which is a small deformation of  $Q$, is a non-trivial infinitesimally isometric deformation of $Q$,
where $\eta_{A^{0}}$ is a vector orthogonal to the plane $A^{0}BC$ of norm 1 and pointing out from $Q$,
and similarly for $\eta_{B^{0}}$ and $\eta_{C^{0}}$.
Then by  a direct calculation(or see Lemma 4.1 of \cite{i10}), the pairs of corresponding edges of $Q_{t}$ and $Q_{-t}$
have the same lengths, for $0<t$ small enough.

Let $O$ be the center of the polyhedron $Q$, i.e, $O$ lies in the line $L$ and its distances to the planes
$ABC$ and $A^{0}B^{0}C^{0}$ are both equal to $h>0$. Let $a$ be the edge length of the equilateral triangle $ABC$, a simple calculation shows
that conditions (1) and (2) of the lemma are  equivalent to
\begin{enumerate}
\item $h^{2}+a^{2}/3> 1$,
\item $h^{2}+a^{2}/12< 1$,
\item  $a^{2}/3< 1$.
\end{enumerate}

So, we can assign $h=1/2$ and $a$ a little bigger than $3/2$, and the lemma follows.
\end{proof}

Pogorelov  \cite{p73} has found  remarkable maps from $\mathbb{S}^{3}_+ \times \mathbb{S}^{3}_+$ and
$\mathbb{H}^{3} \times \mathbb{H}^{3}$ to $ \mathbb{R}^{3} \times \mathbb{R}^{3}$, see also \cite{s98}, \cite{s00}, \cite{s05}
for other forms of these maps and their infinitesimal versions.
What we really need is the first four properties of the following proposition, we state it along with other properties for the sake of future reference.

\begin{proposition}
There exists a map $\Phi:  \mathbb{S}_{1,+}^{3} \times \mathbb{S}_{1,+}^{3} \rightarrow \mathbb{R}^{3} \times \mathbb{R}^{3}$ such that:
\begin{enumerate}
\item  $\Phi$ is a homeomorphism from $\mathbb{S}_{1,+}^{3} \times \mathbb{S}_{1,+}^{3}$ to its image in $\mathbb{R}^{3} \times \mathbb{R}^{3}$,
\item
the restriction of $\Phi$ to the diagonal $\triangle \subset \mathbb{S}_{1,+}^{3} \times \mathbb{S}_{1,+}^{3}$ is the projective map $p$ 
(its image is in the diagonal  $\triangle' \subset \mathbb{R}^{3} \times \mathbb{R}^{3})$,
 \item  let $\alpha$ be a time-orientation preserving isometry of $\mathbb{S}_{1}^{3}$. There is then a Euclidean
isometry $\beta$ as follows. Let $x\in \mathbb{S}_{1,+}^{3}$ with $\alpha(x)\in \mathbb{S}_{1,+}^{3}$, 
we have $\Phi(x, \alpha(x))=(y, y')$ in $\mathbb{R}^{3}$, and we have $y'=\beta(y)$,
\item if $[x,y]$ and $[x',y']$ are two time-like geodesics of the same length in $\mathbb{S}_{1,+}^{3}$,
and, if $p_{1}, p_{2}$ are the projections of $\mathbb{R}^{3} \times \mathbb{R}^{3}$ on the two factors,
then $[p_{1}\circ \Phi \circ (x,x'),p_{1}\circ \Phi \circ (y,y')]$ and $[p_{2}\circ \Phi \circ (x,x'),p_{2}\circ \Phi \circ (y,y')]$
are geodesics of the same length in $\mathbb{R}^{3}$,
\item
if $g_{1},g_{2}:[0,1]\rightarrow \mathbb{S}_{1,+}^{3}$ are space-like geodesic segments parametrized at the same speed,
 then $p_{1}\circ \Phi \circ (g_{1},g_{2})$ and  $p_{2}\circ \Phi \circ (g_{1},g_{2})$ are geodesic segments parametrized at the same speed,
\item there exists a point $x^{0}= p^{-1}(0) \in \mathbb{S}_{1,+}^{3}$ such that, for each 2-plane $\Pi \subset \mathbb{S}_{1,+}^{3}$
containing $x^{0}$,
\begin{equation}
\forall x \in \Pi, ~\forall y \in \mathbb{S}_{1,+}^{3},~p_{1}\circ \Phi(x,y) \in p(\Pi).
\end{equation}
\end {enumerate}
\end{proposition}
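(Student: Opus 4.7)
The plan is to construct $\Phi$ explicitly via a Pogorelov-type formula in the projective model of the de Sitter space, and then verify the six properties in turn. Working with the projective map $p : \mathbb{S}^3_{1,+} \to \mathbb{R}^3 - \overline{B^3}$ already introduced, I would look for $\Phi$ in the symmetric form $\Phi(x,y) = (\phi(x,y), \phi(y,x))$, where $\phi$ is a rational function of the Minkowski coordinates satisfying $\phi(x,x)=p(x)$. This is the structure of the classical Pogorelov map from \cite{p73}, \cite{s98}, \cite{s00}, \cite{s05}; the novelty in the present setting is only that the formulas must be rechecked in the de Sitter signature, after swapping the roles played by time-like and space-like directions.

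The proofs of (1), (2), (3) should then be essentially formal once $\phi$ is fixed. Property (2) is immediate from the normalization $\phi(x,x)=p(x)$. Property (1) follows from smoothness and invertibility of $\phi$ in each variable, the inverse being given by an analogous rational formula. Property (3) uses the fact that a time-orientation preserving isometry $\alpha$ of $\mathbb{S}^3_1$ acts linearly on $\mathbb{R}^4_1$ preserving $\langle\cdot,\cdot\rangle$; the formula for $\phi$ is designed so that along the graph $\{(x,\alpha(x))\}$ the two components of $\Phi$ differ by a single Euclidean isometry $\beta$ independent of $x$. For property (6) the natural choice is $x^{0} = p^{-1}(0)$: a 2-plane $\Pi \subset \mathbb{S}^3_{1,+}$ through $x^{0}$ projects under $p$ to a linear 2-plane in $\mathbb{R}^3$ passing through the origin, and the correction added to $p(x)$ by the Pogorelov formula is itself valued in $\mathrm{span}(p(x))$, hence remains in $p(\Pi)$.

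The technical heart lies in (4) and (5), the length-matching properties that the Pogorelov map is designed to satisfy. For (4), given two time-like geodesics $[x,y]$ and $[x',y']$ of equal de Sitter length, I would parametrize them as $\gamma,\gamma' : [0,1] \to \mathbb{S}^3_{1,+}$ and examine the two Euclidean curves $t \mapsto p_i \circ \Phi(\gamma(t),\gamma'(t))$ for $i=1,2$. The formula for $\phi$ is built precisely so that each of these is an affine segment of $\mathbb{R}^3$, and the key identity, a direct manipulation of $\langle\cdot,\cdot\rangle$ combined with the de Sitter distance formula recalled earlier, shows that both have Euclidean length depending only on the common de Sitter length. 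Property (5) is the analogous statement for space-like geodesics, using the space-like distance formula from \cite{s98}, and has essentially the same proof after adjusting signs.

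I expect the main obstacle to be, first, writing down the correct explicit formula for $\phi$ in the de Sitter signature --- the mixed signs in $\langle\cdot,\cdot\rangle$ make the construction delicate, and one formula must simultaneously satisfy all six properties, in particular the equivariance of (3) and the nontrivial length-matching of (4) --- and second, carrying out the algebraic verification of property (4). Once the formula is in hand, each individual verification reduces to routine algebra with the Minkowski quadratic form, but arriving at the correct formula is the nontrivial Pogorelov insight, and one could alternatively cite the de Sitter Pogorelov map of \cite{s05} directly.
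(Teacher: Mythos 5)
Your outline does follow the same strategy as the paper --- a symmetric Pogorelov-type formula in Minkowski coordinates, normalized to restrict to the projective map $p$ on the diagonal, with (1)--(3) essentially formal and (4)--(5) carrying the metric content --- but as written it has a genuine gap: the map $\Phi$ is never produced, and you yourself single out finding the formula and verifying (4) as the main obstacles without overcoming either. Since the whole assertion of the proposition is the existence of a map with these properties, a plan that stops short of exhibiting the map (or of an exact citation of a de Sitter version) proves nothing yet. The formula the paper uses is
\begin{equation*}
\Phi(x,y)=\frac{2}{x_{0}+y_{0}}\bigl((x_{1},x_{2},x_{3}),(y_{1},y_{2},y_{3})\bigr),
\end{equation*}
together with an explicit rational inverse, and the paper's own justification is largely a deferral to Chapter V of \cite{p73} (\S3 Lemmas 1--4, \S4 Theorems 1--2) and to Section 6 of \cite{v10}, where the auxiliary function $f(a,b)=(a^{2}-b^{2})^{2}-8(a^{2}+b^{2}-2)$ of the hyperbolic case is replaced by its negative to account for the de Sitter signature (this sign change is also what restricts the image of $\Phi$ to a neighborhood of the diagonal, a point your proposal does not address but which matters in the application). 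Note also that the paper only needs (1)--(4); (5) and (6) are stated for reference.

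Where your route genuinely diverges from the paper's is property (4). The paper does not do the algebra you propose: it uses the transitivity of the time-orientation preserving isometry group of $\mathbb{S}^{3}_{1}$ on time-like segments of a fixed length to choose $\alpha$ with $\alpha(x)=x'$ and $\alpha(y)=y'$, and then applies the equivariance property (3), so that $p_{2}\circ\Phi(x,x')=\beta(p_{1}\circ\Phi(x,x'))$ and $p_{2}\circ\Phi(y,y')=\beta(p_{1}\circ\Phi(y,y'))$ for a single Euclidean isometry $\beta$; the two image segments are then congruent, and (4) reduces entirely to (3). Your direct computation is a viable alternative and, once the formula is on the page, is short: writing $\bar{x}=(x_{1},x_{2},x_{3})$, the relations $|\bar{x}|^{2}=1+x_{0}^{2}$ and $\bar{x}\cdot\bar{y}=\langle x,y\rangle+x_{0}y_{0}$ on $\mathbb{S}^{3}_{1}$ show that the difference of the squared Euclidean lengths of the two image segments equals $-8\,(\langle x,y\rangle-\langle x',y'\rangle)/\bigl((x_{0}+x'_{0})(y_{0}+y'_{0})\bigr)$, all other terms cancelling identically; equality of the de Sitter lengths means $\langle x,y\rangle=\langle x',y'\rangle$, and (4) follows. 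Either argument closes the gap, but one of them (and, above all, the explicit formula) must actually appear in the proof.
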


The proof of this proposition can be obtained by following those given by Pogorelov's book
for the hyperbolic space or the sphere. More precisely, it is straightforward to adapt the proof of
\S3 Lemmas 1-4 and \S4 Theorems 1-2 in  Chapter V of \cite{p73}.

Or from Section 6 of \cite{v10}: in Proposition 6.3 and 6.4 of \cite{v10}, we should replace $f(a,b)= (a^{2}-b^{2})^{2}-8(a^{2}+b^{2}-2)$
(in the hyperbolic case and $1>a,b \geq 0$) to $g(a,b)= -(a^{2}-b^{2})^{2}+8(a^{2}+b^{2}-2)$ (in de Sitter case and $1< a,b$). Note that $g(a,b)= -(a^{2}-b^{2})^{2}+8(a^{2}+b^{2}-2)$ is not always positive for $1< a,b$, but this is true for $-4<a^{2}-b^{2}<4$, so $\{(\xi,\eta) \in (\mathbb{R}^{3}-B^{3})\times (\mathbb{R}^{3}-B^{3})| -4<|\xi|^{2}-|\eta|^{2}<4\} \subset Im(\Phi)$, which is an open  neighborhood of the diagonal of $(\mathbb{R}^{3}-B^{3})\times (\mathbb{R}^{3}-B^{3}) \subset \mathbb{R}^{3}\times \mathbb{R}^{3}$.

For the proof of (3) of Proposition 2.2, we just recall that for a time-orientation preserving isometry  $\alpha$ of $\mathbb{S}_{1}^{3}$, in the matrix presentation $A_{4\times 4}$ of it, the $(1,1)$-entry of $A$ is positive, and then (3) of Proposition 2.2 follows from arguments similar to Proposition 6.5 of \cite{v10}.

For the proof of (4) of Proposition 2.2, we need the transitivity of the time-orientation subgroup of $Iso(\mathbb{S}_{1}^{3})$ on the space of time-like geodesic segments of a fixed length, which can be see from the duality between the de sitter space and the hyperbolic space. From this, we have an  time-orientation isometry $\alpha$, such that $\alpha([x,y])=[x',y']$, and then (4) follows from (3). (5) is similar.

The map $\Phi$ in the Minkowski coordinate is given as follows: let  $(x,y)\in \mathbb{S}_{1,+}^{3} \times \mathbb{S}_{1,+}^{3}$, where
$x=(x_{0}, x_{1}, x_{2}, x_{3})$ and $y=(y_{0}, y_{1}, y_{2}, y_{3})$, then
\begin{equation}
\Phi(x,y)=2((x_{1}, x_{2}, x_{3}),(y_{1}, y_{2}, y_{3}))/(x_{0}+y_{0})\in \mathbb{R}^{3} \times \mathbb{R}^{3}.
\end{equation}

The converse  $\Phi^{-1}:\mathbb{R}^{3} \times \mathbb{R}^{3} \rightarrow \mathbb{S}_{1,+}^{3} \times \mathbb{S}_{1,+}^{3}$ is given by
\begin{equation}
((\xi_{1}, \xi_{2}, \xi_{3}),(\eta_{1}, \eta_{2}, \eta_{3}))\rightarrow (\rho((4-|\eta|^{2}+|\xi|^{2},\xi_{1}, \xi_{2}, \xi_{3})),\rho((4-|\xi|^{2}+|\eta|^{2},\eta_{1}, \eta_{2}, \eta_{3}))),
\end{equation} where $\rho$ is the linear normalization such that $\rho((4-|\eta|^{2}+|\xi|^{2},\xi_{1}, \xi_{2}, \xi_{3}))$ and $\rho((4-|\xi|^{2}+|\eta|^{2},\eta_{1}, \eta_{2}, \eta_{3}))$ lie in the hyperboloid $\mathbb{S}_{1,+}^{3}$.

\begin{remark} The Pogorelov maps in  \cite{p73} and \cite{v10} are a little different, 
i.e, they are equal up to  the multiple constant 2, we choose the one similar to \cite{v10}, which is convenient for us.
\end{remark}

\begin{theorem}
There is a triangulation of $S^{2}$ and two spherical inversive distance circle packings
with the same inversive distance and  discrete curvature, but they are not M\"{o}bius equivalent.
\end{theorem}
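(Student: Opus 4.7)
The plan is to transport Sch\"{o}nhardt's infinitesimal Euclidean flex into a pair of hyperideal hyperbolic polyhedra via the Pogorelov map $\Phi$, producing two circle configurations on $S^{2}$ that share all inversive distances but are not M\"{o}bius equivalent. The combinatorics of Sch\"{o}nhardt's polyhedron are octahedral, so the induced triangulation of $S^{2}$ has $6$ vertices, $12$ edges, and $8$ faces.

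I would invoke Lemma~2.1 with $h = 1/2$ and $a$ slightly larger than $3/2$ to obtain, for each small $t>0$, two Euclidean polyhedra $Q_{t}, Q_{-t}$ of identical combinatorics and corresponding edge lengths, whose vertices lie outside $\overline{B^{3}}$ and whose edges meet $B^{3}$. For each vertex index $i \in \{1,\ldots,6\}$, the pair $(V^{i}_{t}, V^{i}_{-t}) \in (\mathbb{R}^{3}\setminus \overline{B^{3}})^{2}$ sits near the diagonal and so lies in $\mathrm{Im}(\Phi)$ by the open-neighborhood remark after Proposition~2.2; I set $(\tilde V^{i}_{t}, \tilde V^{i}_{-t}) := \Phi^{-1}(V^{i}_{t}, V^{i}_{-t})$ to obtain two de Sitter configurations $\tilde Q_{\pm t} = \{\tilde V^{i}_{\pm t}\}_{i=1}^{6}$ in $\mathbb{S}_{1,+}^{3}$. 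Each Euclidean edge lies near an edge of $Q$ crossing $B^{3}$, so its $\Phi^{-1}$-lift is a time-like de Sitter segment, and property~(4) of Proposition~2.2, read as an equivalence (which is straightforward from the explicit formulas for $\Phi$ and $\Phi^{-1}$), transfers the equality of Euclidean edge lengths to the equality $d_{dS}(\tilde V^{i}_{t}, \tilde V^{j}_{t}) = d_{dS}(\tilde V^{i}_{-t}, \tilde V^{j}_{-t})$ on every edge. Dualizing each $\tilde V^{i}_{\pm t}$ to a hyperbolic plane and taking ideal boundaries gives circles $\mathcal{C}^{i}_{\pm t} \subset S^{2} = \partial \mathbb{H}^{3}$; the time-like de Sitter distance equals, up to sign, the hyperbolic distance between the dual planes, which in turn is the inversive distance of the boundary circles. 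Hence the two configurations $\{\mathcal{C}^{i}_{t}\}$ and $\{\mathcal{C}^{i}_{-t}\}$ share a single inversive distance vector $I$ on the octahedral edge set; being genuine circles on the round $S^{2}$, they realize a smooth polyhedral metric with vanishing discrete curvature at every vertex, and for $t$ small the conditions $l_{1}+l_{2}>l_{3}$ and $l_{1}+l_{2}+l_{3}<2\pi$ hold by continuity from $Q$.

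To rule out M\"{o}bius equivalence, suppose a M\"{o}bius transformation $\phi$ of $S^{2}$ sent $\mathcal{C}^{i}_{t}$ to $\mathcal{C}^{i}_{-t}$ for all $i$. It extends to a hyperbolic isometry of $\mathbb{H}^{3}$ and dualizes to a time-orientation preserving isometry $\alpha$ of $\mathbb{S}_{1}^{3}$ with $\alpha(\tilde V^{i}_{t}) = \tilde V^{i}_{-t}$. Property~(3) applied to $\Phi(\tilde V^{i}_{t}, \alpha(\tilde V^{i}_{t})) = (V^{i}_{t}, V^{i}_{-t})$ then furnishes a single Euclidean isometry $\beta$ with $\beta(V^{i}_{t}) = V^{i}_{-t}$ for all $i$, i.e.\ $\beta(Q_{t}) = Q_{-t}$. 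But Sch\"{o}nhardt's flex $\eta$, supported on $\{A^{0}, B^{0}, C^{0}\}$ and equivariant under the rotational symmetry of $Q$, is not the restriction to these vertices of any Killing vector field of $\mathbb{R}^{3}$, even after composition with a rigid symmetry of $Q$; so no such $\beta$ can exist for small $t > 0$, a contradiction. I expect the main technical point to be exactly this last non-congruence step, which amounts to a finite-dimensional linear check against the explicit $\eta$ written down in the proof of Lemma~2.1.
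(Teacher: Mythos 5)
Your proposal is correct and follows essentially the same route as the paper: transport Sch\"{o}nhardt's infinitesimally flexible octahedron into $\mathbb{S}_{1,+}^{3}$ via $\Phi^{-1}$, use property (4) of Proposition 2.2 to equate the time-like edge lengths of $P_{t}$ and $P_{-t}$, and dualize to circle configurations with equal inversive distances and zero curvature. You are in fact more explicit than the paper on the two points it leaves terse, namely reading property (4) in the converse direction and deducing the failure of M\"{o}bius equivalence from property (3) together with the non-congruence of $Q_{t}$ and $Q_{-t}$.
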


\begin{proof}  The triangulation of $S^{2}$ is given from the boundary of the Euclidean octahedron in Lemma 2.1. From Lemma 2.1,  we have two Euclidean polyhedra which have the same edge lengths, but they are not congruent, say $Q_{t}$ and $Q_{-t}$ for a fixed  $t>0$ small enough, which are very near to $Q$. Denoted the vertices of $Q$ by $v^{i}$ and the corresponding vertices of $Q_{t}$ (resp. $Q_{-t}$) by $v^{i}_{t}$ (resp. $v^{i}_{-t}$). Note that $(v^{i},v^{i}) \in Im(\Phi)$  by Proposition 2.2 (2), and from Proposition 2.2 (1), we can assume  $(v^{i}_{t},v^{i}_{-t}) \in Im(\Phi)$. Moreover $\Phi^{-1}\circ (v^{i}, v^{i})$ gives us  a polyhedron in $\mathbb{S}_{1,+}^{3}$ such that each of the edges are time-like. Then  we use Proposition 2.2 (4),  $p_{1}\circ \Phi^{-1}\circ (v^{i}_{t}, v^{i}_{-t})$ and $p_{2}\circ \Phi^{-1}\circ (v^{i}_{t}, v^{i}_{-t})$ give us two polyhedra, say $P_{t}$ and $P_{-t}$,  in $\mathbb{S}_{1,+}^{3}$ such that each of the edges are time-like and which have the same corresponding edge lengths. For each vertex of $P_{t}$, we have a circle in $S^{2}$, which is the ideal boundary of the hyperbolic plane dual to the vertex.  But recall that the de Sitter length here is essentially the inversive  distance of the circles corresponding to two ideal vertices of the  hyperideal hyperbolic polyhedra. So, we have  two spherical inversive distance circle packing metrics, they induced the same standard spherical metric in $\mathbb{S}^{2}$, thus they have the same discrete curvature zero. These  two spherical inversive distance circle packing are not M\"{o}bius equivalent, as can be seen also from the Pogorelov map.

\end{proof}

\begin{corollary}  There is a hyperideal polyhedron $P$ such that each face of it is a triangle and in any neighborhood of $P$, there are two hyperideal polyhedra $P_{t}$ and $P_{-t}$ which have the same combinatorics and the corresponding edges of them have the same length.
\end{corollary}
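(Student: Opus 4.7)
The plan is to observe that the corollary is essentially a reinterpretation of the construction carried out in the proof of Theorem 2.4, viewed from the hyperideal-polyhedron side of the duality instead of from the circle-packing side. The construction already produces, for each small $t>0$, two polyhedra $P_t$ and $P_{-t}$ in $\mathbb{S}^{3}_{1,+}$ with the combinatorics of Sch\"onhardt's octahedron, with all edges time-like, and with equal corresponding edge lengths.

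First I would check that $P_t$ and $P_{-t}$ literally are hyperideal polyhedra once we pass to the projective model. In the projective model $p$ of $\mathbb{S}^{3}_{1,+}$, the de Sitter space is $\mathbb{R}^{3}-\overline{B^{3}}$, so the vertices of $p(P_t)$ lie outside $\overline{B^{3}}$. The hypothesis that every edge of $P_t$ is time-like means, by definition of time-like in the projective model, that the Euclidean segment joining the two endpoint vertices meets $B^{3}$. These are exactly conditions (1) and (2) in the definition of a hyperideal polyhedron, so $p(P_t)$ and $p(P_{-t})$ are hyperideal polyhedra, and similarly for the limit $p(P_0)=Q$ (up to the projective identification with the original Euclidean polyhedron). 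The faces are triangular because the Sch\"onhardt octahedron is a simplicial polytope.

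Next I would match the two notions of edge length. The length of an edge of a hyperideal polyhedron is defined as the hyperbolic distance between the two planes $A^{*}$ and $B^{*}$ dual to its endpoints; and by the discussion at the end of Section 2.1, that hyperbolic distance is, up to sign, exactly the de Sitter distance between $A$ and $B$ along the time-like geodesic $[A,B]$. Hence the time-like edge length data that Proposition 2.2(4) preserves under the Pogorelov map is the same datum as the edge length data of the associated hyperideal polyhedra. In particular $P_t$ and $P_{-t}$, which by construction have equal corresponding de Sitter edge lengths, give two hyperideal polyhedra with equal corresponding hyperbolic edge lengths.

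Finally I would address the ``any neighborhood'' and ``distinct'' clauses. Since $\Phi^{-1}$ is a homeomorphism on its image (Proposition 2.2(1)) and $(v^{i}_{t},v^{i}_{-t})\to (v^{i},v^{i})$ as $t\to 0$, the polyhedra $P_{\pm t}$ converge to a common hyperideal polyhedron $P$ as $t\to 0$; so for every neighborhood of $P$ we can choose $t>0$ small enough that $P_{t}$ and $P_{-t}$ lie in it. The two hyperideal polyhedra are not congruent because the corresponding Sch\"onhardt polyhedra $Q_{t}$ and $Q_{-t}$ are not congruent and the Pogorelov map sends a hyperbolic isometry to a Euclidean one (Proposition 2.2(3)), so any hyperbolic congruence between $p(P_{t})$ and $p(P_{-t})$ would descend to a Euclidean congruence between $Q_{t}$ and $Q_{-t}$. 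The only real checking is the identification of the two length definitions; everything else is a direct translation.
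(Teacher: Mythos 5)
Your proposal is correct and follows essentially the same route as the paper: push the Sch\"onhardt polyhedra $Q_{t}$, $Q_{-t}$ through the Pogorelov map to get two de Sitter polyhedra with time-like edges of equal lengths, and reinterpret them as hyperideal hyperbolic polyhedra via the identification of de Sitter distance with the hyperbolic distance between dual planes. You additionally spell out the ``any neighborhood'' clause and the non-congruence via Proposition 2.2(3), which the paper leaves implicit.
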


\begin{proof} Now from Lemma 2.1, we have two Euclidean polyhedra $Q_{t}$ and $Q_{-t}$, which have the same edge length, but they are not  congruent. Then we use Proposition 2.2, we get two polyhedra in $\mathbb{S}_{1,+}^{3}$ such that each of the edges are time-like.

Such polyhedra  in $\mathbb{S}_{1,+}^{3}$ can be viewed as hyperideal hyperbolic polyhedra, and the distance in the de Sitter geometry is just  the distance of the circles corresponding to two ideal vertices of the hyperideal hyperbolic polyhedra, which is the edge length of the hyperideal hyperbolic polyhedra.
\end{proof}

\begin{remark}
Our polyhedra above are not convex,  a similar phenomena appears in convex hyperbolic  polyhedra, but, some of the faces are not triangle, see Theorem 2' of \cite{s00}.

\end{remark}

\bibliographystyle{amsplain}

\end{document}